\newtheorem{theorem}{Theorem}[section]
\newtheorem{lemma}[theorem]{Lemma}
\newtheorem{corollary}[theorem]{Corollary}
\theoremstyle{remark}
\theoremstyle{definition}
\numberwithin{equation}{section}
\DeclareMathOperator{\Kdb}{{\mathbb K}}
\DeclareMathOperator{\Cdb}{{\mathbb C}}
\begin{document}

\title[]{Finite generation in $C^\ast$-algebras\\ and Hilbert $C^\ast$-modules}

\thanks{The first author was supported by a grant from the NSF.  We are 
also grateful to the 
QOP (Quantum groups, operators and non-commutative probability) research network
and UK research council grant  EP/K019546/1 for some assistance.}

\author{David P. Blecher}
\address{Department of Mathematics, University of Houston, Houston, TX
77204-3008}
\email[David P. Blecher]{dblecher@math.uh.edu}

\author[T. Kania]{Tomasz Kania}
\address{Institute of Mathematics, Polish Academy of Sciences, \'{S}niadeckich 8, 00-956 Warszawa, Poland and Department of Mathematics and Statistics, Fylde College, Lancaster University, Lancaster LA1 4YF, United Kingdom}
\email{tomasz.marcin.kania@gmail.com}

\begin{abstract}  We characterize $C^*$-algebras and $C^*$-modules
such that every maximal right ideal (resp.\ right submodule)
is algebraically finitely generated. In particular, $C^*$-algebras satisfy the Dales--\.Zelazko conjecture.
\end{abstract}

\maketitle

\section{Introduction}  

Magajna's paper \cite{Mag} characterizing $C^*$-modules consisting of compact 
operators has been much
emulated, as is revealed by a cursory search in a citation index.   
Here we prove a complementary characterization, inspired by the recent Dales--\.Zelazko conjecture that 
if $A$ is a  unital Banach algebra all of whose maximal right ideals are algebraically finitely generated as right modules over $A$, 
then $A$ is 
finite dimensional \cite{DZ}.   Indeed the instigation of this paper was a question Dales asked 
independently of both authors, and which both authors answered
 around August 2012, as to whether this conjecture was true for $C^*$-algebras.
(He was able to answer this for special classes of  $C^*$-algebras.)   
One ingredient  of the solution is a characterization of algebraically finitely generated one-sided
ideals in $C^*$-algebras.  Although this is well known to experts (the 
algebraically
finitely generated projective modules over a $C^*$-algebra constitute  
one of the common ways to picture its K-theory, and hence are 
well understood), we could not find
it in the literature. Thus we include a direct proof due to R\o{}rdam, as well as a very
short $C^*$-module proof.  We then use this to characterize $C^*$-algebras and $C^*$-modules
such that every maximal right ideal (resp.\ right submodule)
is algebraically finitely generated.

Turning to notation and background, we denote by $A^1$ the unitization of the $C^*$-algebra $A$.  
By `projection' in this paper we mean a self-adjoint idempotent $e$ in $A$.
Then $e$ is a {\em minimal projection} in $A$ if  $e A e$ is
one dimensional (which if $A$ is a von Neumann algebra,
is equivalent to $e$ having no non-trivial proper subprojections).
 For convenience we usually work with right modules
in this paper.  It is well known that all $C^*$-algebras have an abundant supply of maximal
right ideals.   This is equivalent to saying that the bidual of $A$, $A^{\prime \prime}$, which is a von Neumann algebra, has 
an abundant supply of non-zero minimal projections (see 3.13.6 in  \cite{Ped}, or the paragraph before 
Lemma \ref{even1} below, for the correspondence
between minimal projections and maximal right ideals).
 Indeed, every right ideal is an intersection of maximal
right ideals (see 3.13.5 in  \cite{Ped}).  We will not really 
use the facts in the present paragraph though,
except for those which we prove below.
 
Before we proceed we require another piece of terminology. Let $H$ and $K$ be Hilbert spaces. A closed subspace $Z$ of $\mathscr{B}(K, H)$ is called a {\em ternary ring of operators} (or a \emph{TRO}, for short) if it is closed under the \emph{ternary product}, that is, $ZZ^*Z\subseteq Z$. Every Hilbert $C^*$-module $Z$ may be viewed as a TRO by identifying it with the ($1$-$2$)-corner of its 
linking algebra (see \emph{e.g.}\ 8.1.19 and 8.2.8 in \cite{BLM}).  Thus we will write $z^* w$ in place of $\langle z, w \rangle$ for elements in a
right  $C^*$-module $Z$.  Also, the so-called compact operators $\Kdb(Z)$ may be written as $Z Z^*$ (here and below for sets
$X, Y$ we write $XY$ for the closure of the span of products $xy$ for $x \in X, y \in Y)$.
To say that two TRO's are isomorphic as TRO's means that there is a 
linear isomorphism
between them which is a ternary morphism (that is, $T(xy^*z) = T(x)T(y)^* T(z)$).  
Hamana showed that this is equivalent to inducing
a corner-preserving $*$-isomorphism between the
Morita linking $C^*$-algebras of the TRO's; and it is also equivalent to being 
completely isometric as operator spaces (a result also contributed to by Harris, Kaup, 
Kirchberg, Ruan, and no doubt others; see \emph{e.g.}\ \cite{BLM} for references and self-contained 
proofs).   

\section{Finitely generated ideals}

The following lemma is well known to experts, although we could not find a 
reference for it. We shall present a direct self-contained proof that we are grateful to
Mikael R\o{}rdam for having communicated to us.  Of course there are many other proofs,
including the one in the next Remark.

\begin{lemma} \label{fgc}  Every algebraically finitely generated closed left (resp.\ right) ideal of a $C^*$-algebra $A$  is actually singly generated, and equals $Ap$ (resp.\ $pA$) for a projection $p \in
A$.  \end{lemma}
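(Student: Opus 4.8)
The plan is to treat the right-ideal case; the left-ideal case follows by applying the result to the reversed multiplication (equivalently, by taking adjoints). Write $J$ for the given algebraically finitely generated closed right ideal, say $J = x_1 A + \cdots + x_n A$ with $x_1,\dots,x_n \in J$. The first step is to replace this generating set by a single positive element: set $a = \sum_{i=1}^n x_i x_i^* \ge 0$. Since $x_i x_i^* \in x_i A \subseteq J$, we have $a \in J$, and using the standard facts that $\overline{x_i A} = \overline{x_i x_i^* A}$ and that $0 \le c \le d$ implies $\overline{cA} \subseteq \overline{dA}$, one checks $\overline{aA} = \overline{\sum_i x_i A} = J$. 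Thus $J = \overline{aA}$ and, crucially, $x_i x_i^* \le a = a^{1/2}(a^{1/2})^*$ for each $i$.

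Next I would linearise the factorisation. By the $C^*$-algebraic Douglas factorisation lemma, $x_i x_i^* \le a^{1/2}(a^{1/2})^*$ yields $x_i = a^{1/2} w_i$ for some $w_i \in A''$ with $\|w_i\| \le 1$. Now I apply finite generation to a deliberately rough element: since $a^{1/4}$ is a continuous function of $a$ vanishing at $0$, it lies in $\overline{aA} = J$, so $a^{1/4} = \sum_{i} x_i c_i$ for some $c_i \in A$. Substituting the factorisations gives, in $A''$,
$$ a^{1/4} = \sum_i a^{1/2} w_i c_i = a^{1/2} W, \qquad W := \sum_i w_i c_i \in A'', $$
where $W$ is bounded, with $\|W\| \le \sum_i \|c_i\|$. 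This single bounded relation is where finiteness of the generating set is doing the real work.

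The core step is to squeeze the spectrum out of $a^{1/4} = a^{1/2} W$. Represent $A$ (hence $A''$) faithfully and normally on a Hilbert space $H$. Fix $t_0 \in \sigma(a) \setminus \{0\}$; for each $\eta > 0$ the spectral projection $\chi_{(t_0 - \eta, t_0 + \eta)}(a)$ is nonzero, so choosing a unit vector $\xi$ in its range makes $\langle a^{1/4}\xi, \xi \rangle$ as close as we like to $t_0^{1/4}$ and $\|a^{1/2}\xi\|$ as close as we like to $t_0^{1/2}$. From $\langle a^{1/4}\xi,\xi\rangle = \langle W\xi, a^{1/2}\xi\rangle \le \|W\| \, \|a^{1/2}\xi\|$ I obtain, in the limit $\eta \to 0$, the scalar inequality $t_0^{1/4} \le \|W\|\, t_0^{1/2}$, that is $t_0 \ge \|W\|^{-4}$. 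Hence every nonzero point of $\sigma(a)$ is bounded below by $\|W\|^{-4} > 0$, so $0$ is isolated in (or absent from) $\sigma(a)$.

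With $0$ isolated, the proof finishes by functional calculus. The characteristic function of $\sigma(a) \setminus \{0\}$ is continuous on $\sigma(a)$ and vanishes at $0$, so $p := \chi_{(0,\infty)}(a) \in A$ is a projection, and the function $t \mapsto t^{-1}$, extended by $0$ at $0$, defines $a^{-1} \in A$ with $a^{-1} a = p$. Then $p = a^{-1} a \in aA \subseteq J$, so $pA \subseteq J$; conversely $a = pa \in pA$ gives $aA \subseteq pA$, and since $pA$ is norm-closed, $J = \overline{aA} \subseteq pA$. Therefore $J = pA$, singly generated by the projection $p$, as required. The main obstacle is conceptual rather than computational: recognising that the finite generating set must be tested against a non-smooth element such as $a^{1/4}$ and combined with the $A''$-valued factorisation to produce one bounded operator $W$, after which the spectral estimate is routine; the reduction to a single positive generator and the standard Douglas factorisation into the bidual are the supporting technical points.
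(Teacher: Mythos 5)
Your argument is correct, and its overall architecture is exactly the paper's: reduce to a single positive element $a$ with $J=\overline{aA}$, exploit algebraic finite generation by expanding the ``rough'' element $a^{1/4}\in J$ over the generators with bounded coefficients, deduce that $\sigma(a)$ has a gap at $0$, and extract the generating projection by continuous functional calculus. Where you genuinely diverge is in how the gap is derived. The paper never leaves $A$: writing $b^{1/4}=\sum_j c_j a_j$ with $b=\sum_j a_j^*a_j$, it applies the elementary inequality $x^*y+y^*x\leqslant x^*x+y^*y$ termwise to $b^{1/2}=(b^{1/4})^*b^{1/4}=\sum_{j,k}a_j^*c_j^*c_ka_k$ to get $b^{1/2}\leqslant nKb$ with $K=\max_j\|c_j\|^2$, and reads off the gap by spectral calculus. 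You instead pass to $A''$ via Douglas factorization ($x_i=a^{1/2}w_i$), collapse the relation to $a^{1/4}=a^{1/2}W$, and test against approximate eigenvectors in a faithful normal representation. This is valid, but it imports the bidual, Douglas's lemma and spectral measures to prove an estimate that is available in two lines inside $A$: in your notation, putting $y_i=x_ic_i$ one has $a^{1/2}=a^{1/4}(a^{1/4})^*=\sum_{i,j}y_iy_j^*\leqslant n\sum_i y_iy_i^*\leqslant nK\sum_i x_ix_i^*=nKa$, which gives the same lower bound $(nK)^{-2}$ on $\sigma(a)\setminus\{0\}$ without representing anything; so what your detour buys is mainly the explicit single bounded operator $W$, at the cost of heavier machinery. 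One small bookkeeping point: for non-unital $A$ the generated ideal is $\sum_i x_iA^1$ and the coefficients $c_i$ live in $A^1$ (the paper makes this reduction explicit); this is harmless in your argument, since $W$ still lies in $A''$ and the final $p=f(a)$ still lies in $C^*(a)\subseteq A$.
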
 

\begin{proof} 
We consider first the case where $A$ is unital. Let $J\subseteq A$ be a closed left ideal. We shall use the following
  general fact: for each positive element $a\in J$ and
  each continuous function \mbox{$f\colon[0,\infty)\to\mathbb{R}$} with
  $f(0)=0$, we have $f(a)\in J$. (This follows by
  approximating~$f$ uniformly on the spectrum of~$a$ by real
  polynomials vanishing at $0$.)

  Suppose that~$J$ is generated as a left ideal by the
  elements $a_1,\ldots,a_n$ for some $n\in\mathbb{N}$, and set $b = a_1^*
  a_1 + \cdots + a_n^* a_n\in J$. Then~$b^{1/4}$ belongs
  to~$J$ by the above fact, so that $b^{1/4}= c_1a_1+ \cdots
  +c_na_n$ for some $c_1,\dots,c_n\in A$. We may suppose
  that~$J$ is non-zero, which implies that $b$ is non-zero,
  and consequently $c_1,\dots,c_n$ are not all zero. Since $x^* x + y^* y - (x^* y + y^* x) =
(x-y)^* (x-y) \geqslant 0$
for any $x, y \in A$,  we deduce that
$$a_j^*
  c_j^* c_ka_k + a_k^* c_k^* c_ja_j 
  \leqslant a_j^* c_j^* c_ja_j + a_k^*
  c_k^* c_ka_k$$ for $j\ne k$.  Hence
  \[ b^{1/2} = (b^{1/4})^* b^{1/4} = \sum_{j,k=1}^n a_j^*
  c_j^* c_ka_k\leqslant n\sum_{j=1}^na_j^* c_j^* c_ja_j\leqslant nK
  \sum_{j=1}^na_j^* a_j = nKb, \] where $K = \max_{1\leqslant j\leqslant
    n}\|c_j\|^2 > 0$. By elementary spectral calculus, this implies
  that the spectrum of~$b$ is contained in the set
  $\{0\}\cup[(nK)^{-2},\infty)$, so that we can take a continuous
  function $f\colon[0,\infty)\to [0,1]$ such that $f(0)=0$ and
  $f(t)=1$ for each $t\geqslant (nK)^{-2}$.  Then $p=f(b)$ is a projection such that $pb = bp = b$, and $p$ belongs
  to~$J$ by the fact stated above. In particular we have 
\[0
  = (1-p)b(1-p) = \sum_{j=1}^n\big(a_j(1-p)\big)^* a_j(1-p),\] which
  implies that $a_j = a_jp\in Ap$ for each
  $j\in\{1,\ldots,n\}$.
    Hence $J =
  Ap$, and the result follows.

Let us now consider the case where $A$ is non-unital. Let $J$ be a closed, finitely generated left ideal of $A$. Then $J$ is finitely generated when regarded as a 
left ideal of $A^1$. 
Let $p$ be a projection in $A^1$ such that $J =A^1p = \{ x \in A^1 : xp = x \}$. Then $p =  1p \in J \subset A$, so $$J =  \{ x \in A : xp = x \} =Ap.$$  

The right-ideal case is similar or follows by symmetry by considering the opposite $C^*$-algebra.
\end{proof}

\noindent
{\bf Remark.}  Lemma  \ref{fgc} also follows from a well-known $C^*$-module 
`generalization' of it, 
which is a basic result 
in the theory of  Hilbert $C^*$-modules (see \emph{e.g.}\ 
p.\ 255--257 in \cite{W-O} or the proof of 8.1.27 in \cite{BLM}).  Namely, a 
right $C^*$-module $Z$ over $A$ is  algebraically finitely generated over 
$A$ iff there are
finitely many $z_k \in Z$ with $z = \sum_k z_k z_k^* z$ for all $z \in Z$.  Note that this immediately implies  Lemma \ref{fgc} by taking $Z$ to be the 
right ideal of $A$ in Lemma \ref{fgc}: in this case if $e = \sum_k z_k z_k^*$, which is in $Z$, then $e^2 = e$
and $e \geqslant 0$.  So $e$ is a projection in the right ideal, and now it is
easy to see that this right ideal equals $eA$.

\bigskip

If $K$ is a maximal right ideal of $A$ 
then  $e$, the complement of the support projection of $K$, is a minimal projection
in $A^{\prime \prime}$.  
This is well known (see 3.13.6 in  \cite{Ped}), but 
here is a simple argument for this.   We recall that 
the support projection of $K$ is the smallest projection $p \in A^{\prime \prime}$ with 
$px = x$ for all $x \in K$.   Thus $e$ is the largest projection in $A^{\prime \prime}$ with
$ex = 0$ for all $x \in K$.  We will assume for simplicity
that  $e \in A$, which will 
be the case for us in
 Corollary \ref{ocha} below, but the general case is 
very similar (but uses modifications
of some steps below using Cohen factorization and 
`second dual techniques' valid in any Arens regular Banach algebra,
and one should replace $eAe$ and $eA$ below by $\{ a \in A\colon a = eae \}$ and
$\{ a \in A\colon a = ea \}$). We 
will use only the well-known fact that every non-trivial $C^*$-algebra has a proper non-zero closed right ideal, \emph{e.g.}\ the right kernel of any non-faithful state. 
If $e$ is not minimal, that is if $A_e = eAe$ is not one dimensional, then $A_e$ has a proper closed non-zero right ideal $I$, and $I = I A_e$ as usual.  Then $W = I A$ is a closed right ideal of $A$.  Note that  
 $W \neq eA$ since $\{ w \in W : w e = w \} \subset I \neq A_e$.  On the  other hand, 
$K + W = A$ by maximality of $K$ (note $K \cap W \subset (1-e) A \cap eA = \{0\}$).   Thus 
$e A =  e(K+W) = W$.   This contradiction shows that 
$e$ is a minimal projection. 

\begin{lemma}  \label{even1}  A   $C^*$-algebra $A$  is  unital
if even one maximal right ideal is algebraically finitely generated over $A$.
 \end{lemma}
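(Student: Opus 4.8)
The plan is to exploit the correspondence recalled just before the statement. If $K$ is the given maximal right ideal, let $e$ be the complement of its support projection, so that $e$ is a nonzero minimal projection of $A''$ and $ex=0$ for all $x\in K$. Then $K\subseteq\{a\in A: ea=0\}$, and since the right-hand side is a proper right ideal (it cannot be all of $A$, for $eA\neq\{0\}$) containing the maximal ideal $K$, maximality gives $K=\{a\in A: ea=0\}$. In particular $K$ is the kernel of the norm-continuous map $a\mapsto ea$, hence \emph{closed}. Being also algebraically finitely generated, $K$ now falls under Lemma \ref{fgc}, which produces a projection $p\in A$ with $K=pA$. Comparing support projections (that of $pA$ is readily seen to be $p$) forces $p=1-e$, where $1$ denotes the identity of $A''$. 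Thus $1-e\in A$ already, and the entire problem reduces to showing $e\in A$: then $1=p+e\in A$ and $A$ is unital.

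To place $e$ in $A$, I would examine the corner $eAe$. By minimality of $e$ we have $eAe\subseteq eA''e=\mathbb{C}e$; and because $p=1-e\in A$, each element $eae=(1-p)a(1-p)$ genuinely lies in $A$, so $eAe$ is a subspace of $A$ sitting inside the one-dimensional space $\mathbb{C}e$. Hence $eAe=\{0\}$ or $eAe=\mathbb{C}e$, and in the second case $e\in A$ at once. The crux is to exclude $eAe=\{0\}$, and here the $C^*$-identity does the work: if $eAe=\{0\}$ then for every $a\in A$ one has $\|ea\|^2=\|(ea)(ea)^*\|=\|e(aa^*)e\|=0$, so $eA=\{0\}$; as left multiplication by $e$ is weak*-continuous on $A''$ and $A$ is weak*-dense in $A''$, this gives $e\cdot 1=e=0$, contradicting minimality. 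Therefore $eAe=\mathbb{C}e$ and $e\in A$, completing the argument.

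The step I expect to be the main obstacle is precisely this passage from the abstract minimal projection $e\in A''$ to honest membership in $A$; everything else is soft once one has $K=pA$ with $p\in A$. It is worth being careful that the reduction making $K$ closed (and hence licensing Lemma \ref{fgc}) does not secretly assume $e\in A$ — it does not, since only the boundedness of $a\mapsto ea$ and the nonvanishing of $e$ are used. If one prefers to avoid biduals entirely, the same conclusion follows from the Peirce decomposition $A=pA\oplus(1-p)A$: the complement $(1-p)A$ is the range of the bounded idempotent $a\mapsto(1-p)a$, hence closed, and it is isomorphic to the simple module $A/K$, so it is a minimal right ideal; being simple with $A=A^2$ (Cohen factorization) it is cyclic, hence algebraically finitely generated, and Lemma \ref{fgc} yields $(1-p)A=fA$ for a projection $f\in A$ with $pf=0$, whereupon $(p+f)A=A$ exhibits $p+f$ as an identity for $A$.
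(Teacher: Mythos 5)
Your argument is correct and is essentially the paper's own proof: both obtain $1-e\in A$ by applying Lemma~\ref{fgc} to the (closed) maximal ideal and identifying the resulting projection with the support projection, and then place $e$ itself in $A$ by noting that the corner $eAe$ lies in $A$ and equals $\mathbb{C}e\neq\{0\}$ by minimality of $e$ in $A''$. The extra details you supply (the closedness of $K$, and the $C^*$-identity/weak$^*$-density argument showing $eAe\neq\{0\}$) merely flesh out steps the paper asserts without proof, and your closing Peirce-decomposition variant is only a cosmetic repackaging of the same idea.
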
 

\begin{proof}  As we said above, a maximal right ideal of $A$ has a support projection whose complement is a minimal projection $q \in A^{\prime \prime}$. On the other hand, if $J$ is an algebraically finitely generated right ideal
then by Lemma \ref{fgc} the support projection of $J$  is in $J$.  Thus if $J$ is an
 algebraically finitely generated  right ideal which is a maximal right ideal, then $1-q \in J \subset A$ for
a  non-zero minimal projection $q$  in  $A^{\prime \prime}.$
Hence $q = 1 - (1-q)$ belongs to $M(A)$, the multiplier algebra of $A$, and of course 
$q A q \neq \{0\}$ since $q \neq 0$.  Therefore $\{0\} \neq qAq = \Cdb q \subset A$, and so $q$ and $1 = (1-q) + q$ are 
in $A$.   So $A$  is unital.
\end{proof}  

\begin{corollary} \label{ocha}  A   $C^*$-algebra $A$  is  finite dimensional iff every  maximal right ideal is algebraically finitely generated over $A$.  
\end{corollary}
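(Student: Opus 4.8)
The forward implication is immediate, so I would dispose of it first: if $A$ is finite-dimensional then it is unital, and every right ideal is a finite-dimensional subspace; choosing a vector-space basis $x_1,\dots,x_m$ of such an ideal $J$ we have $x_i = x_i\cdot 1 \in \sum_i x_i A \subseteq J$, so $J = \sum_i x_i A$ is algebraically finitely generated. The substance is the converse, and my plan is to extract a clean structural consequence of the hypothesis and then feed it into the reduced atomic representation.

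First I would invoke Lemma \ref{even1} to conclude that $A$ is unital, so that $1 \in A$ throughout. Next I would combine the discussion preceding Lemma \ref{even1} with Lemma \ref{fgc}: a maximal right ideal $K$ has support projection $p = 1-q \in A^{\prime\prime}$, where $q$ is a minimal projection of $A^{\prime\prime}$; if $K$ is algebraically finitely generated, then Lemma \ref{fgc} forces $K = pA$ with $p \in K \subseteq A$, whence $q = 1-p \in A$. Since maximal right ideals correspond bijectively to minimal projections of $A^{\prime\prime}$, the hypothesis yields the key fact that I would record as the engine of the proof: \emph{every minimal projection of $A^{\prime\prime}$ already lies in $A$}.

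I would then pass to the faithful reduced atomic representation, identifying $A$ with a unital $C^*$-subalgebra $A_0$ of $B := \bigoplus_\pi^{\ell^\infty} \mathscr{B}(H_\pi)$, the $\ell^\infty$-direct sum over the equivalence classes $\pi$ of irreducible representations of $A$; here $B$ is the atomic part of $A^{\prime\prime}$, and since the continuous part carries no minimal projections, the minimal projections of $A^{\prime\prime}$ are exactly the rank-one projections living in a single summand $\mathscr{B}(H_\pi)$. By the engine above, all such projections lie in $A_0$, so $A_0$ contains every $\Kdb(H_\pi)$ (rank-one projections generate $\Kdb(H_\pi)$), hence the closed ideal $J := \bigoplus_\pi^{c_0}\Kdb(H_\pi)$, and it contains the unit $1_B$.

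The hard part will be showing $A_0 = J$; this is where the whole weight of ``every maximal right ideal'' is used. I would argue by contradiction: if $A_0 \neq J$, then the nonzero $C^*$-algebra $A_0/J$ admits an irreducible representation, which pulls back to an irreducible representation $\rho$ of $A$ annihilating $J$. But $\rho$ is equivalent to one of the atomic summands, say $\rho \cong \pi_0$, and then any rank-one projection $e \in \mathscr{B}(H_{\pi_0}) \subseteq J$ satisfies $\rho(e) \neq 0$ while $\rho(J) = 0$ gives $\rho(e) = 0$, a contradiction. Hence $A_0 = J$, so the $c_0$-direct sum $\bigoplus_\pi^{c_0}\Kdb(H_\pi)$ is unital; this is possible only when there are finitely many summands and each $H_\pi$ is finite-dimensional, giving $A \cong \bigoplus_{i=1}^{k} M_{n_i}$. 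Therefore $A$ is finite-dimensional, as required.
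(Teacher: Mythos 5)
Your proposal is correct, but it reaches the conclusion by a genuinely different route than the paper. Both arguments open the same way: Lemma \ref{even1} gives unitality, and Lemma \ref{fgc} applied to a maximal right ideal $K$ puts the complement of its support projection inside $A$. From there the paper stays entirely inside $A$: it forms the right ideal $J$ generated by the minimal projections \emph{of} $A$, shows $J=A$ by a maximality argument (a maximal ideal containing $J$ would have a minimal projection as the complement of its support projection, by the discussion preceding Lemma \ref{even1}, giving $e\in J\subseteq (1-e)A$), writes $1=\sum_k e_k a_k$ and finishes with the elementary fact that $\dim(eAf)\leqslant 1$ for minimal projections. You instead upgrade the hypothesis to the statement that $A$ contains \emph{every} minimal projection of $A''$ --- which requires the full bijection between maximal right ideals and minimal projections of $A''$ (Pedersen 3.13.6), a fact the paper cites but deliberately avoids using --- and then feed this into the reduced atomic representation: $A$ contains each $\Kdb(H_\pi)$ and the unit, equals $\bigoplus_\pi^{c_0}\Kdb(H_\pi)$ by your quotient/irreducible-representation argument, and unitality of a $c_0$-sum of compacts forces finitely many finite-dimensional summands. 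Your route is sound (the quotient argument correctly rules out anything above the $c_0$-sum, since every irreducible representation of $A$ occurs as a summand of the atomic representation and is nonzero on the corresponding copy of the compacts), and it has the merit of proving the converse direction of Corollary \ref{ochamin} directly rather than deducing it from Corollary \ref{ocha} as the paper does; the cost is the imported machinery (the Pedersen correspondence, the identification of the atomic part of $A''$ and of its minimal projections), where the paper's argument is self-contained and purely algebraic at the final step.
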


\begin{proof}  For the non-obvious direction, by Lemma~\ref{even1} we may suppose that $A$ is unital. 
Let $J$ be the right ideal generated by all the minimal projections in $A$.  If $J
 \neq A$ let $K$ be a maximal (proper) right ideal of $A$ containing $J$.  
The support
projection of $K$ is in $A$ by Lemma~\ref {fgc}, hence
its complement $e$ is in $A$ too.
As we proved above  Lemma~\ref{even1}, $e$
is a minimal projection, and we obtain the contradiction
$e \in J \subseteq K =  (1-e)A$.  So $A = J$, and therefore 
$1 = \sum_{k=1}^n \, e_k a_k = \sum_{k=1}^n \ a_k^* e_k$ for 
minimal projections $e_k$, and 
some $a_k \in A$.    It is well known from pure algebra that dim$(e A f) \leqslant 1$ 
for minimal $e, f \in A$ (a
quick proof in our case where these are 
projections: if $v = ea f \neq 0$ then $v^* v$ is a positive scalar multiple of $f$, so that 
left multiplication by $v^*$ is an isomorphism $eAf \cong fAf$).  From these facts it is clear that 
$A  = \sum_{j,k=1}^n \, e_j A e_k$ is finite dimensional.   
\end{proof}
\noindent
{\bf Remark.}   Although 
we have chosen to give a selfcontained
 $C^*$-algebraic argument in the last proof,  there are more algebraic 
arguments available that even allow one to generalize some of the above.
For example, note that the hypothesis in the last result together with a result of the type of 
Lemma \ref{fgc}, implies that every maximal right ideal is a (module) direct summand.    
But the latter implies finite dimensionality.   Indeed the elementary 
 argument in the  lines after Proposition 5.10
in \cite{Rey} (which is a slight variant of our argument 
in the last proof) shows that any ring $A$ whose maximal right ideals are (module) direct summands,
equals its socle.  Hence $A$ is semisimple in the ring-theoretic sense, and one can apply the Wedderburn-Artin theorem.  We thank Manuel Reyes for the last reference.  So  if $A$ is in addition  a Banach algebra over $\Cdb$ it is now clear 
that it is  finite dimensional.   Similarly,
one obtains the well known fact that a unital Banach algebra 
with dense socle (and hence equals its socle) is  finite dimensional.
   This is also related to the theory of  {\em modular annihilator algebras}
(see  \emph{e.g.}\  8.4.14 in \cite{Pal}, and its proof).   

\begin{corollary}  \label{ochamin}  A   unital $C^*$-algebra $A$  is  finite dimensional iff $A$ contains all 
 minimal projections in $A^{\prime \prime}$.
\end{corollary}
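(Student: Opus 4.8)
The plan is to deduce this from Corollary \ref{ocha}, using the correspondence between maximal right ideals and minimal projections set up in the paragraph preceding Lemma \ref{even1}. The forward implication is immediate: if $A$ is finite dimensional then it is reflexive as a Banach space, so the canonical embedding $A \to A^{\prime \prime}$ is surjective and $A = A^{\prime \prime}$; in particular every minimal projection of $A^{\prime \prime}$ already lies in $A$.

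For the converse I would show that the hypothesis forces every maximal right ideal of $A$ to be algebraically (indeed singly) generated, and then invoke Corollary \ref{ocha}. So fix a maximal right ideal $K$ of $A$, let $p$ be its support projection in $A^{\prime \prime}$, and let $e = 1 - p$ be the complementary projection. By the discussion before Lemma \ref{even1} (equivalently, 3.13.6 in \cite{Ped}), $e$ is a non-zero minimal projection in $A^{\prime \prime}$. The hypothesis now gives $e \in A$, and hence $p = 1 - e \in A$ as well, since $A$ is unital.

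It then remains to identify $K$ with $pA$. Because $px = x$ for every $x \in K$ and $p \in A$, each such $x$ equals $px \in pA$, so $K \subseteq pA$, and $pA$ is a right ideal of $A$. Moreover $pA \neq A$: if $pA = A$ then $1 = pa$ for some $a \in A$, whence $p = p(pa) = pa = 1$, forcing $e = 0$ and contradicting the non-triviality of the minimal projection $e$. By maximality of $K$ we conclude $K = pA$, which is generated by the single element $p$. Thus every maximal right ideal of $A$ is algebraically finitely generated, and Corollary \ref{ocha} yields that $A$ is finite dimensional.

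The one point that requires care, and which I expect to be the main obstacle, is the appeal to minimality of $e$. The self-contained argument given before Lemma \ref{even1} was carried out under the standing assumption that $e \in A$, whereas here it is precisely minimality of $e$ that we use in order to deduce $e \in A$ from the hypothesis. To avoid any circularity I would instead invoke the general form of that fact (3.13.6 in \cite{Ped}), which establishes that $e$ is a non-zero minimal projection for an arbitrary maximal right ideal, with no a priori assumption that $e$ belongs to $A$.
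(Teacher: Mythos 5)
Your proposal is correct and follows essentially the same route as the paper: use the hypothesis to place the support projection $p$ of each maximal right ideal in $A$, identify the ideal with $pA$, and invoke Corollary \ref{ocha}. You simply spell out the details the paper leaves implicit (that $K\subseteq pA$, that $pA$ is proper, and the appeal to the general form of 3.13.6 in \cite{Ped} to avoid circularity), which are all fine.
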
 \begin{proof}  If $A$ contains all such projections
and $J$ is a maximal right ideal of $A$, then
the support projection $p$ of $J$ is in $A$ (since its complement is a minimal projection).
So $J = pA$.  The result now follows from 
Corollary \ref{ocha}.   
\end{proof}  
\noindent
{\bf Remark.}   One might ask which of the results above extend to the class
of
not necessarily self-adjoint algebras of operators
on a Hilbert space (resp.\ to 
classes of Banach algebras).  In \cite{BRIII,BZ}  there are variants
of one or two of the facts above for closed right ideals with a contractive (resp.\
 `real-positive' left approximate identity).  For example, comparing with Lemma \ref{fgc},
such right ideals which are algebraically finitely generated as 
right modules over the algebra $A$,
are precisely the right ideals  of the form $eA$ for a projection $e$ (resp.\
a `real-positive' idempotent) in the algebra 
 (see \cite[Corollary 2.13]{BRIII}
and \cite[Corollary 4.7]{BZ}; in the latter reference it is also assumed
that $A$ has a contractive approximate identity
but probably this is not necessary).   Comparing with 
Lemma \ref{even1}, and following its
proof, one sees that $A$  is a unital operator algebra say, 
which possesses even one such ideal 
which is algebraically finitely generated over $A$, and which is maximal in the 
sense that the complement $e$ of its support projection is minimal in the sense 
that  $e A^{\prime \prime} e$ is one dimensional.   However even if $A$ is unital, it 
need not have any right ideals of this type at all.  Thus our techniques above towards
 the Dales--\.Zelazko conjecture break down in this case, although our method suggests
that the way to proceed may be via the socle of $A$.   
      
\section{A $C^*$-module generalization}  

We now show that $C^*$-modules of the form  $\bigoplus_{k  = 1}^m \,  \mathscr{B}(\Cdb^{n_k},H_k)$ (that is,   
direct sums of rectangular matrix blocks with the length of the rows in each block allowed to be infinite), 
are the `only'  right $C^*$-modules $Z$ such that  every maximal right submodule of $Z$ is  algebraically finitely generated.

\begin{theorem} \label{mfgcs}   Let $Z$ be a right $C^*$-module.
Then every maximal right submodule of $Z$ is  algebraically finitely generated  iff  there are positive  integers $m, n_1, \cdots, n_m$, and Hilbert spaces $H_k$, such that  $Z \cong \bigoplus_{k  = 1}^m \,  \mathscr{B}(\Cdb^{n_k},H_k)$ as TRO's.   
\end{theorem}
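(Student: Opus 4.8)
The plan is to reduce the statement to the algebra case, Corollary \ref{ocha}, by transporting the hypothesis from the submodule lattice of $Z$ to the right-ideal lattice of the $C^*$-algebra $\Kdb(Z) = ZZ^*$ of ``compact'' operators, over which $Z$ is a full Hilbert module. First I would record the module form of Lemma \ref{fgc} supplied by the Remark following it: a right submodule $Y \subseteq Z$ is algebraically finitely generated over $A$ if and only if $Y = eZ$ for a projection $e \in \Kdb(Z)$, obtained as $e = \sum_k y_k y_k^*$ from generators $y_k$ of $Y$. This is the module analogue of ``singly generated by a projection,'' and it is the bridge between finite generation of $Y$ and the internal structure of $\Kdb(Z)$.

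Next I would set up an order isomorphism between the closed right submodules of $Z$ and the closed right ideals of $B := \Kdb(Z)$, given by $Y \mapsto YZ^*$ with inverse $J \mapsto JZ$. That these are mutually inverse uses nondegeneracy, $YZ^*Z = YA = Y$ and $JZZ^* = JB = J$, via approximate identities and Cohen factorization exactly as in the discussion preceding Lemma \ref{even1}. Being inclusion preserving, this bijection matches maximal right submodules with maximal right ideals of $B$; and by the previous paragraph it matches an algebraically finitely generated submodule $eZ$ with the principal right ideal $eB$, so that ``algebraically finitely generated'' is preserved in both directions.

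Consequently every maximal right submodule of $Z$ is algebraically finitely generated if and only if every maximal right ideal of $B = \Kdb(Z)$ is algebraically finitely generated, which by Corollary \ref{ocha} is equivalent to $\Kdb(Z)$ being finite-dimensional, say $\Kdb(Z) \cong \bigoplus_{k=1}^m M_{n_k}$. The remaining step is purely structural: since $Z$ is a full Hilbert module over $\Kdb(Z)$, decomposing along the finitely many central projections of $\Kdb(Z)$ writes $Z$ as a direct sum of full modules, each over a single matrix algebra, and the structure theory of Hilbert modules over a finite-dimensional $C^*$-algebra (through the linking algebra and Morita picture) identifies each summand with a rectangular block, yielding $Z \cong \bigoplus_{k=1}^m \mathscr{B}(\Cdb^{n_k}, H_k)$ as TROs. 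The converse direction simply reverses this computation: for such a $Z$ one has $\Kdb(Z) \cong \bigoplus_k M_{n_k}$ finite-dimensional, so the equivalences above run backwards.

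I expect the main obstacle to be the forward transport of finite generation through the correspondence. One must ensure that an arbitrary maximal right ideal of $B$, which a priori need not be norm closed, pulls back to a genuine maximal submodule to which the hypothesis applies, and that the projection produced lies in $\Kdb(Z)$ itself rather than merely in its multiplier algebra. Controlling non-closed ideals and the possibly non-unital algebra $\Kdb(Z)$ here is precisely the delicate point flagged before Lemma \ref{even1}, and is where Cohen factorization and the second-dual techniques are needed to make the order isomorphism and Corollary \ref{ocha} applicable.
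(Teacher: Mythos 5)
Your proposal follows essentially the same route as the paper: the module form of Lemma \ref{fgc}, the Brown-type correspondence $J \mapsto JZ$, $W \mapsto WZ^*$ between right ideals of $\Kdb(Z)=ZZ^*$ and right submodules of $Z$ (with Cohen factorization making it work), reduction to Lemma \ref{even1} and Corollary \ref{ocha} to get $ZZ^*$ finite dimensional, and then the standard Morita/structure theory for modules over $\bigoplus_k M_{n_k}$. The delicate points you flag (closedness of the maximal ideals, unitality of $\Kdb(Z)$) are handled in the paper exactly as you indicate, so the argument is correct and not genuinely different.
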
   \begin{proof}  ($\Rightarrow$) \   Suppose that  $Z$ is a right $C^*$-module over a $C^*$-algebra $B$, and that every maximal right submodule of $Z$ is  algebraically finitely generated over $B$.  Then  every maximal right submodule $W$ of $Z$ is  algebraically finitely generated over $Z^* Z$
(since $W$ is a non-degenerate $Z^* Z$-module and hence any $w \in W$ may be written as $w = w' c$ for 
$w' \in W, c \in Z^* Z$ by Cohen's factorization theorem.  Hence $w b = w' (cb)$ with $cb \in Z^* Z$, for $b \in B$).   So we may assume that $B = Z^* Z$.  

We will be using the simple relationship between right submodules of $Z$ and right ideals of $Z Z^*$ perhaps first noticed by Brown \cite{Br}. If $J$ is a maximal right ideal of $A = \Kdb(Z) = Z Z^*$, then $J Z$ is a  right submodule of $Z$.  If   $J Z = Z$ then 
$$J = J A = J Z Z^* = Z Z^* =  A,$$ a contradiction.  So $JZ$ is a proper  right submodule of $Z$.    If
$W$ is a proper closed right submodule of $Z$ containing $JZ$, then $W Z^*$ is a right ideal of $\Kdb(Z)$ and
it contains $J A= J$.  If $W Z^* = A$, then $W = W Z^* Z = A Z = Z$, a contradiction.  Hence $W Z^* = 
J$, so that  $W = W Z^* Z = J Z$.  Thus $JZ$ is a maximal right  submodule of $Z$, and hence
$JZ$ is finitely generated over $Z^*Z$.  By the well-known argument/fact in the remark after Lemma \ref{fgc} 
above, $JZ$ has generators $z_1, \ldots , z_n$ with $$\sum_{k = 1}^n \, z_k z_k^* a z = az$$ for all  $a \in J, z \in Z$.   Hence 
$e a = a$ for all  $a \in J$ where $e = \sum_{k = 1}^n \, z_k z_k^* \in J$.  Clearly $J = e Z Z^*$.  By Lemma \ref{even1} we see that $Z Z^*$ is unital,
and by 
Corollary \ref{ocha}  we have that  $Z Z^*$ is a finite dimensional $C^*$-algebra, hence 
 $Z Z^* \cong \bigoplus_{k  = 1}^m \, M_{n_k}$ $*$-isomorphically.  Now we are in well-known territory,
indeed Hilbert $C^*$-modules over $C^*$-algebras of compact operators are completely understood. 
For example, by basic Morita equivalence 
(as in \emph{e.g.}\ the proof on pp.\ 851--852 in \cite{Mag}, or p.\ 2125 of \cite{Sch}) we have $Z^* Z \cong \bigoplus_{k  = 1}^m \, \Kdb(H_k)$, and $Z \cong  \bigoplus_{k  = 1}^m \,  \mathscr{B}(\Cdb^{n_k},H_k)$, for
 Hilbert spaces $H_k$.  (The cited papers do not explicitly use the term 
`ternary morphism', but it is clear
that their morphisms are such.)  
 
 ($\Leftarrow$)   This is the easy direction.  Indeed, if $Z =  \bigoplus_{k  = 1}^m \,  \mathscr{B}(\Cdb^{n_k},H_k)$ then every 
right $Z^*Z$-submodule $W$ is finitely generated over $Z^* Z$ (since $W W^*$ is finite dimensional, hence unital).  
And clearly this property is preserved by ternary isomorphisms.
\end{proof}  
\noindent
{\bf Remark.}  All  right $C^*$-modules (which are not Hilbert spaces) have an abundant supply of maximal
right submodules (one can see this for example from the paragraph before Lemma \ref{even1}, and the correspondence in the proof of
Theorem \ref{mfgcs}).  Indeed, every right submodule is an intersection of maximal
right submodules.

\bigskip
\noindent {\bf Closing remark.}
Let $\kappa$ be a cardinal number. We will say that a right module $V$ over $A$ is \emph{algebraically} $\kappa$-\emph{generated} if there is a set $\{v_\alpha\colon \alpha<\kappa\}$ in $V$ with cardinality $\kappa$ such that
every element in $A$ is a finite sum $\sum_{k=1}^n \, v_{\alpha_k} a_k$ for some 
$a_k \in A^1$ and $\alpha_1, \ldots , \alpha_n<\kappa$. We call algebraically $\aleph_0$-generated modules \emph{algebraically countably generated}.  One might ask if  
`algebraically finitely generated' could be replaced by `algebraically
countably generated' or `algebraically $\kappa$-generated' for some uncountable cardinal $\kappa$ in all of the results in our paper.  In fact this is automatic in the countable case: 
It is proved in \cite{Boudi} that a right ideal of a Banach algebra
is closed if its closure is algebraically 
countably generated in this sense.  The proof in \cite{Boudi} works
for modules too; thus a right submodule of a Banach module over $A$
is closed if its closure is algebraically 
countably generated.   Then as in  \cite[Corollary 1.6]{DZ},
closed algebraically
countably generated right submodules of a Banach module over $A$
are finitely generated.  One can even go one step further using some set theory related 
to Martin's axiom.  We shall use the
 so-called \emph{pseudo-intersection number} $\mathfrak{p}$, a certain cardinal.
That is, $\mathfrak{p}$ is the minimal cardinality of a family $(U_\alpha)_{\alpha<\lambda}$ of open dense subsets of $\mathbb{R}$ such that $\bigcap_{\alpha<\lambda}U_\alpha$ is not dense in $\mathbb{R}$.

\begin{corollary} \label{cgen}  A closed algebraically
countably generated right submodule of a Banach module over $A$
is finitely generated.  Moreover, if a closed algebraically $\kappa$-generated right submodule of a Banach module is separable, where $\kappa<\mathfrak{p}$, then it is finitely generated.
\end{corollary}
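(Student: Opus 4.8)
The plan is to deduce both assertions from a single Baire-category dichotomy, applied at two levels of cardinality. Throughout let $V$ denote the closed submodule in question; being closed it is itself a Banach $A$-module, hence a complete metric space. Given a generating family, and a subset $S$ of the index set, write $\tilde V_S$ for the \emph{algebraic} submodule generated by $\{v_\alpha : \alpha \in S\}$, i.e.\ the set of finite sums $\sum_{\alpha \in S} v_\alpha a_\alpha$ with $a_\alpha \in A^1$. When $S$ is finite, $\tilde V_S$ is exactly the range of the bounded linear map $T_S \colon (A^1)^{|S|} \to V$, $(a_\alpha) \mapsto \sum_\alpha v_\alpha a_\alpha$ (the module action being jointly continuous). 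The observation I would isolate first is an open-mapping dichotomy: since $T_S$ is a bounded linear map between Banach spaces, either $T_S$ is onto, so that $\tilde V_S = V$ and $V$ is algebraically finitely generated, or $\tilde V_S$ is meager in $V$. Indeed, were $\tilde V_S$ non-meager, the usual proof of the open mapping theorem would force $T_S$ onto. Thus every \emph{proper} finitely generated algebraic submodule is meager in $V$.

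For the first assertion I would note that this is already the content of the remark preceding the statement (Boudi's theorem combined with the argument of \cite[Corollary 1.6]{DZ}); but the dichotomy above reproves it directly and without separability. If $V$ is algebraically countably generated, enumerate its generators and let $\tilde V_n$ be the algebraic submodule generated by the first $n$ of them, so that $V = \bigcup_{n} \tilde V_n$ is a countable increasing union. Since $V$ is a complete metric space, the classical Baire category theorem forbids all the $\tilde V_n$ from being meager; hence some $\tilde V_n$ is non-meager, so $\tilde V_n = V$ and $V$ is algebraically finitely generated.

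For the separable case I would run the identical argument with the generalized Baire category theorem in place of the classical one. We may assume $V$ is infinite-dimensional, since a finite-dimensional $V$ has a finite linear basis and is therefore trivially finitely generated; then $V$ is a perfect Polish space. Enumerate the generators as $\{v_\alpha : \alpha < \kappa\}$. As $\kappa$ is infinite, $|[\kappa]^{<\omega}| = \kappa$, and every element of $V$ lies in some $\tilde V_S$ with $S$ finite, so $V = \bigcup_{S \in [\kappa]^{<\omega}} \tilde V_S$ is a union of $\kappa$ sets. If all of them were proper they would all be meager by the dichotomy, exhibiting $V$ as a union of $\kappa < \mathfrak p$ meager sets. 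This is impossible: a perfect Polish space cannot be covered by fewer than $\mathfrak p$ meager sets, which is immediate from the recalled characterization of $\mathfrak p$ (valid for $\mathbb R$) together with the homeomorphism-invariance of the least size of a meager cover, and in any case holds because $\mathfrak p \leqslant \mathrm{cov}(\mathcal M)$. Hence some $\tilde V_S$ is non-meager, so $\tilde V_S = V$ and $V$ is algebraically finitely generated.

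The functional-analytic core is thus the elementary open-mapping dichotomy, and the routine points—joint continuity of the action, and the finite-dimensional reduction—I would dispatch quickly. The step I expect to be the main obstacle is the set-theoretic input in the separable case: one must transfer the definition of $\mathfrak p$ (phrased via dense open subsets of $\mathbb R$) to the statement that the specific separable Banach space $V$ is not a union of $<\mathfrak p$ meager sets. This is precisely where separability is indispensable, since a non-separable complete metric space can fail such a covering bound; making the transfer clean (isolating the perfect/infinite-dimensional hypothesis and invoking invariance of the meager covering number among perfect Polish spaces) is the delicate part of the proof.
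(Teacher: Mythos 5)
Your argument is correct, but it takes a genuinely different route from the paper's. The paper's proof, in both cases, applies (generalized) Baire category to the union of the \emph{closed} submodules generated by the finite subsets of the generating set, concludes that one of these closed submodules is all of $I$, and then invokes the module version of Boudi's theorem (a submodule whose closure is algebraically countably generated is closed) to pass from ``the finitely generated algebraic submodule is dense'' to ``it is everything''. You instead isolate an open-mapping dichotomy: the algebraic submodule $\tilde V_S$ generated by a finite set $S$ is the range of a bounded linear map $(A^1)^{|S|}\to V$ between Banach spaces, hence is either all of $V$ or meager in $V$ (this is the standard form of the open mapping theorem for maps with non-meager range). This lets you apply Baire directly to the algebraic submodules and dispenses with Boudi's theorem altogether. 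What the paper's route buys is brevity by citation (it leans on \cite{Boudi} and \cite[Corollary 1.6]{DZ}); what yours buys is self-containedness, a uniform treatment of both cases, and the avoidance of any need to verify that the closure $\overline{\tilde V_S}=I$ is countably generated before Boudi's result can be invoked. Your set-theoretic step is also sound: with the paper's stated definition of $\mathfrak p$ (which is in effect the covering number of the meager ideal of $\mathbb R$), the assertion that an infinite-dimensional separable Banach space is not a union of fewer than $\mathfrak p$ meager sets follows from the invariance of this covering number among perfect Polish spaces, which is precisely the content of the generalized Baire principle the paper cites from \cite[Corollary 22C]{Frem}; and your observation that separability is indispensable here matches the paper's closing counterexample $C[0,\omega_1]$.
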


\begin{proof}  The `countably generated' case is just as in the proof of \cite[Corollary 1.6]{DZ},
but using the module version of Boudi's result discussed above.
In the other case, 
let $G$ be a set of algebraic generators for a closed submodule $I$,
 with $|G|< \mathfrak{p}$.  Then
the family of all finite subsets of $G$ has the same cardinality.  
For cardinals $< \mathfrak{p}$, there is a 
generalization of Baire's category theorem valid in
 separable metric spaces; see \emph{e.g.}\ \cite[Corollary 22C]{Frem}.
We proceed similarly to the proof of \cite[Corollary 1.6]{DZ}, but apply this generalized   
Baire principle to the  union of the closed submodules generated by finite subsets of $G$, 
to see that one such submodule equals $I$.  Finally, apply the module version of Boudi's result discussed above.  
\end{proof}

Let us note that the separability assumption in Corollary~\ref{cgen} cannot be dropped. Indeed, let $A=C[0,\omega_1]$, that is, $A$ is the commutative $C^*$-algebra of all continuous functions on the ordinal interval $[0,\omega_1]$. Let $I$ be the ideal of $A$ consisting of functions which vanish at $\omega_1$. (As a Banach space, $I$ is clearly non-separable.) Each function $f$ in $I$ has countable support $\mbox{supp}\, f$, since
 continuous functions on $[0,\omega_1]$ are eventually constant. Let $f\in I$. We can then write $f = f\cdot \mathbf{1}_{[0,\alpha]}$, where $\mathbf{1}_{[0,\alpha]}$ is the characteristic function of the ordinal interval $[0,\alpha]$ and $\alpha = \sup \mbox{supp}\, f$. 
Since $\alpha$ is countable, we have
 $\mathbf{1}_{[0,\alpha]} \in I$.  Thus 
 $I$ is not finitely generated, but is algebraically
 $\aleph_1$-generated (regardless of whether $\aleph_1<\mathfrak{p}$ or not).


\begin{thebibliography}{99} 

\bibitem{Ake} C. A. Akemann, Left ideal structure of $C^*$-algebras. \emph{J. Funct. Anal.}\ 
{\bf 6} (1970), 305--317.
 


\bibitem{BLM}  D. P. Blecher
and C.  Le Merdy, {\em Operator algebras and their modules---an
operator space approach,} Oxford Univ.\  Press, Oxford (2004).

\bibitem{BZ} D. P. Blecher
and N. Ozawa, 
Real
positivity and approximate identities
in Banach algebras,
Preprint 2014.  arXiv:1405.5551
  
\bibitem{BRIII}  D. P. Blecher
and C. J. R. Read, Order theory and interpolation in operator algebras, Preprint 2014.   arXiv:1407.1356  
 \bibitem{Boudi}  N. Boudi, Banach algebras in which every left ideal is countably generated, 
{\em Irish Math.\ Soc.\ Bull.}\ {\bf 48} (2002), 17--24.

\bibitem{Br}  L. G. Brown, Close hereditary $C^*$-subalgebras and the structure of quasi-multipliers, MSRI preprint \# 11211--85, 1985. 

 \bibitem{Dal}  H. G. Dales, {\em Banach algebras and automatic continuity},
London Mathematical Society Monographs.
New Series, 24, Oxford Science Publications.
The Clarendon Press, Oxford University Press, New York, 2000.

\bibitem{DZ} H. G. Dales and W.~\.Zelazko, Generators of maximal left ideals in Banach algebras, \emph{Studia Math.} \textbf{212} (2012), 173--193.


\bibitem{Frem}  D. Fremlin, {\em Consequences of Martin's axiom,}
 Cambridge Tracts in Mathematics, 84. Cambridge University Press, Cambridge, 1984.


\bibitem{KR}  
R. V. Kadison and J. R. Ringrose,
{\em Fundamentals of the theory of operator algebras,} Vol. 1, 
Graduate Studies in Mathematics,
Amer.\ Math.\ Soc.\ Providence, RI, 1997.

\bibitem{Mag}  B. Magajna, Hilbert $C^*$-modules in which all closed submodules are complemented,
{\em  Proc.\ Amer.\ Math.\ Soc.} {\bf  125} (1997),  849--852. 


 \bibitem{Pal} T. W. Palmer, {\em Banach algebras and the general
theory of $*$-algebras, Vol.\ I.\ Algebras and Banach algebras,}
Encyclopedia of Math.\ and its Appl., 49, Cambridge University
Press, Cambridge, 1994.

 \bibitem{Ped} G. K. Pedersen,  {\em $C^*$-algebras and their automorphism
groups,} Academic Press, London (1979).

\bibitem{Rey}   M. L. Reyes, A one-sided prime ideal principle for noncommutative rings,
{\em J.\ of Algebra and Applns.} {\bf 9} (2010), 877--919.

\bibitem{Sch}  J.  Schweizer, A description of Hilbert $C^*$-modules in which all closed submodules are orthogonally closed,
{\em  Proc.\ Amer.\ Math.\ Soc.} {\bf  127} (1999),  2123--2125.


\bibitem{W-O}  N.~E.~Wegge-Olsen, \emph{K-theory and $C^*$-algebras}, Oxford University Press, New York, 1993.
\end{thebibliography}
\end{document}